\theoremstyle{plain}
\newtheorem{theo}{Theorem}[section]
\newtheorem{lemm}[theo]{Lemma}
\newtheorem{prop}[theo]{Proposition}
\theoremstyle{definition}
\newtheorem{defi}[theo]{Definition}
\newtheorem{cons}[theo]{Construction}
\newtheorem{exa}[theo]{Example}
\newtheorem{rem}[theo]{Remark}
\numberwithin{equation}{section}
\newcommand{\op}{^{\mathrm{op}}}
\newcommand{\cat}{\mathscr}
\newcommand{\on}{\mathrm}
\newcommand{\Map}{\on{Map}}
\newcommand{\Cat}{\cat{C}at}
\renewcommand{\S}{\cat{S}}
\newcommand{\CC}{\mathsf{C}}
\newcommand{\DD}{\mathsf{D}}
\newcommand{\EE}{\mathsf{E}}
\newcommand{\NN}{\mathsf{N}}
\newcommand{\WW}{\mathsf{W}}
\newcommand{\id}{\mathrm{id}}
\renewcommand{\L}{\mathfrak{L}}
\newcommand*{\longhookrightarrow}{\ensuremath{\lhook\joinrel\relbar\joinrel\rightarrow}}
\newcommand*{\longhookleftarrow}{\ensuremath{\leftarrow\joinrel\relbar\joinrel\rhook}}
\newcommand{\we}{\stackrel{\simeq}{\longrightarrow}}
\newcommand{\wc}{\stackrel{\simeq}{\longhookrightarrow}}
\newcommand{\lwe}{\stackrel{\simeq}{\longleftarrow}}
\newcommand{\lto}{\longrightarrow}
\newcommand{\lwc}{\stackrel{\simeq}{\longhookleftarrow}}
\title{Brown categories and bicategories}
\author{Geoffroy Horel}
\address{Max Planck Institute for Mathematics,\\
Vivatsgasse 7,\\
53111 Bonn,\\
Germany}
\email{geoffroy.horel@gmail.com}
\thanks{The author was supported by Michael Weiss's Alexander von Humboldt professor grant}
\keywords{}
\subjclass[2010]{18G55,18G30,18D05}
\begin{document}

\begin{abstract}
In a Brown category of cofibrant objects, there is a model for the mapping spaces of the hammock localization in terms of zig-zags of length $2$. In this paper we show how to assemble these spaces into a Segal category that models the infinity-categorical localization of the Brown category.
\end{abstract}

\maketitle

\section{Introduction}
Given a category $\cat{C}$ with a subcategory $w\cat{C}$ of weak equivalences, we can always form the category $\on{Ho}(\cat{C},w\cat{C})$ (or $\on{Ho}(\cat{C})$ if there is no ambiguity). It is uniquely defined up to equivalence of categories by the fact that, for any category $\cat{B}$, the category of functors from $\on{Ho}(\cat{C})$ to $\cat{B}$ is equivalent to the category of functors from $\cat{C}$ to $\cat{B}$ sending the maps of $w\cat{C}$ to isomorphisms in $\cat{B}$. 

It is nowadays well-understood that the homotopy category of $\cat{C}$ is the shadow of a richer object: the $\infty$-categorical localization of $\cat{C}$ at $w\cat{C}$. This is an $\infty$-category with a map from $\cat{C}$ that sends the maps of $w\cat{C}$ to weak equivalences and which is initial (in the $\infty$-categorical sense) with this property. In other words, it is the $\infty$-category that satisfies an $\infty$-categorical version of the characterization of the homotopy category explained in the previous paragraph. One of the most famous model for this $\infty$-categorical localization is the hammock localization of Dwyer and Kan (see~\cite{dwyercalculating}). The output of the hammock localization is a simplicially enriched category. The category of simplicially enriched categories can be equipped with a notion of weak equivalences that make them into a model for $\infty$-categories (\emph{cf.}  \cite{bergnerthree}). Applying the homotopy coherent nerve to the hammock localization, we obtain a quasicategory model of the $\infty$-categorical localization of $\cat{C}$ at $w\cat{C}$ (this fact is proved in~\cite[Proposition 1.2.1.]{hinichdwyer}).

Although extremely useful theoretically, the hammock localization has very complicated mapping spaces built out of arbitrary zig-zags of maps in $\cat{C}$. Fortunately in many cases, it is enough to restrict to much simpler zig-zags. For instance, if $\cat{C}$ is a Brown category of cofibrant objects and $X$ and $Y$ are objects of $\cat{C}$, then, we can model the space of maps from $x$ to $y$ as the nerve of a category whose objects are the objects of the undercategory $\cat{C}^{x\sqcup y/}$ of the form $X\lto Y'\lwc Y$ and whose morphisms are morphisms in the undercategory $\cat{C}^{x\sqcup y/}$ that are sent to weak equivalences by the forgetful functor $\cat{C}^{x\sqcup y/}\to\cat{C}$.
A proof of this fact can be found in~\cite{weisshammock} assuming the existence of functorial factorizations in $\cat{C}$ and in~\cite[Proposition 3.23]{cisinskiinvariance} in full generality. A different proof appears in~\cite[Theorem 4.11.]{lowcocycles}. This fact can also been seen as a corollary of our main result (see Theorem~\ref{theo: main}). A closely related construction appears in work of Jardine under the name \emph{cocycle categories} (see~\cite{jardinecocycle}).

This model of Weiss and Cisinski is very simple but it seems that we have lost one of the key feature of the hammock localization, namely the ability to compose. Indeed, if we have two of these zig-zags $x\lto y'\lwc y$ and $y\lto z'\lwc z$ representing zero simplices in the mapping space from $x$ to $y$ and the mapping space from $y$ to $z$, we can form their composite in the hammock localization. This is given by the concatenation
\[x\lto y'\lwc y\lto z'\lwc z\]
which is sadly not a $0$-simplex in the mapping space from $x$ to $z$. Fortunately, there is another very natural way to compose these two zig-zags. Using the axioms of a Brown category, we can form the diagram
\[\xymatrix{
  &  &z''&  &  \\
  &y'\ar[ur]& &z'\ar@{_{(}->}[ul]^\sim& \\
x\ar[ur]& &y\ar[ur]\ar@{_{(}->}[ul]^\sim& &z\ar@{_{(}->}[ul]^\sim
}
\] 
in which the square is cocartesian. Then we can define the composite of our two zig-zags to be the exterior two edges of the above diagram.

This construction does not quite form a $2$-category since pushouts are only well-defined up to isomorphisms. One way to deal with this issue is to make choices of compositions and then add coherence isomorphisms. That way, we obtain a bicategory assembling all these mapping spaces together. Such a construction can be found in~\cite[Remark 1.2.]{weisshammock} and Cisinski in~\cite[p.524]{cisinskiinvariance}. 

In this paper, we have chosen a different approach that is in our sense more natural. Instead of making choices of compositions, there is a way to package all the possible choices into a single object. The result is a Tamsamani bicategory that we call the Weiss bicategory of $(\cat{C},w\cat{C})$ (see Definition~\ref{defi: weiss bicategory} for a precise construction). A Tamsamani bicategory is a simplicial diagram in categories that satisfies Segal conditions (see Definition~\ref{def: Tamsamani}). Hitting such an object with the nerve functor yields a Segal category. The main result of the present paper asserts that the Segal category that we obtain after applying the nerve functor the Weiss bicategory of $(\cat{C},w\cat{C})$ is a model for the $\infty$-categorical localization of $\cat{C}$ at $w\cat{C}$. More precisely, we prove that it is equivalent to Rezk's relative nerve construction (see Theorem~\ref{theo: main}). The fact that Rezk's relative nerve construction is a model for the $\infty$-categorical localization is a folklore theorem. Since this result seems to be missing from the literature we provide a proof in the appendix (see Theorem~\ref{theo:main appendix}) based on work of To\"en. 

Our main result is valid for categories that are more general than Brown categories of cofibrant objects. We introduce the notion of a partial Brown category in Definition~\ref{defi PBC}. Any Brown category (with functorial cylinders) is a partial Brown category but there are many partial Brown categories that cannot be extended to a Brown category. In short, Brown categories model $\infty$-categories with finite colimits whereas we prove in Proposition~\ref{prop: any rel cat is a PBC}, that partial Brown categories can model all $\infty$-categories.

\subsection*{Notations}

We denote by $\Cat$ the category of small categories. We denote by $\S$ the category of simplicial sets. We call the objects of $\S$ spaces rather than simplicial sets. Given an object $X$ of $\S$, we sometimes use the phrase ``the points of $X$'' to refer to the zero-simplices of $X$.

We denote by $\ast$ the terminal object of both $\S$ and $\Cat$.

We denote by $N:\Cat\to \S$ the nerve functor. We say that a map $f:C\to D$ in $\Cat$ is a \textbf{weak equivalence} if $N(f)$ is a weak equivalence in the Kan-Quillen model structure on $\S$. These maps are the weak equivalences of the Thomason model structure on $\Cat$ constructed in~\cite{thomasoncat}. Note that this notion of weak equivalence is different from the notion of \textbf{equivalence of categories}. An equivalence of category is a weak equivalence but the converse is not true.

We call an object of $\Cat^{\Delta\op}$ a simplicial category. This is a nonstandard terminology but we will never use the notion of simplicially enriched category. The functor $N:\Cat\to\S$ extends to a functor $\Cat^{\Delta\op}\to\S^{\Delta\op}$ that we also denote by $N$.

\section{Partial Brown categories}

Recall that a relative category is a pair $(\cat{C},w\cat{C})$ in which $\cat{C}$ is a category and $w\cat{C}$ is a subcategory containing all the objects. We denote by $\cat{RC}at$ the category of relative categories and weak equivalences preserving maps. Given a category $\cat{C}$, we denote by $\cat{C}$ the relative category $(\cat{C},\on{Ob}(\cat{C}))$ and by $\cat{C}^{\sharp}$ the relative category $(\cat{C},\cat{C})$. There is a functor $N^R_\bullet:\cat{RC}at\to\S^{\Delta\op}$ sending $(\cat{C},w\cat{C})$ to the simplicial space $N_p^R(\cat{C},w\cat{C})_q:=\cat{RC}at([p]\times[q]^\sharp,(\cat{C},w\cat{C}))$. It is proved as the main result of~\cite{barwickrelative} that $\cat{RC}at$ has a model structure whose weak equivalences are the maps that are sent by $N^R_\bullet$ to weak equivalences in the model structure of complete Segal spaces. Moreover the authors of~\cite{barwickrelative} prove that this model category is a model for the $\infty$-category of $\infty$-categories. They also prove in~\cite[Theorem 1.8.]{barwickcharacterization} that the weak equivalences of relative categories are exactly the maps that are sent to Dwyer-Kan equivalences of simplicially enriched categories by the Dwyer-Kan hammock localization.

We now recall the notion of a Brown category. These are relative categories with the additional data of a class of cofibrations. The dual notion was introduced by Brown in~\cite[I.1.]{brownabstract} under the name category of fibrant objects.

\begin{defi}
A \textbf{Brown category} is a category $\cat{C}$ with all finite coproducts and equipped with the data of two subcategories $w\cat{C}$ (whose maps are called the weak equivalences) and $c\cat{C}$ (whose maps are called the cofibrations) satisfying the following axioms.
\begin{enumerate}
\item The weak equivalences satisfy the two-out-of-three property and contain all the isomorphisms.
\item The isomorphisms are cofibrations.
\item The cobase change of a map in $c\cat{C}$ (resp. $w\cat{C}\cap c\cat{C}$) along any map exists and is in $c\cat{C}$ (resp. $w\cat{C}\cap c\cat{C}$).
\item For any $X$ in $\cat{C}$, there exists a factorization of the codiagonal $C\sqcup C\to C$ as the composite of a cofibration $C\sqcup C\to C\otimes I$ followed by a weak equivalence $C\otimes I\to C$.
\item For any object $X$ of $\cat{C}$, the map $\varnothing\to X$ is a cofibration.
\end{enumerate}
\end{defi}

Our main result (Theorem~\ref{theo: main}) does not require the full strength of the axioms of a Brown category. We now introduce the definition of a partial Brown category. This is a structure on a relative category that is weaker than that of a Brown category but still sufficient for our purposes.

\begin{defi}\label{defi PBC}
A \textbf{partial Brown category} (hereafter abbreviated to PBC) is a category $\cat{M}$ with two subcategories $w\cat{M}$ and $c\cat{M}$ whose maps are called respectively the weak equivalences and trivial cofibrations such that the following axioms are satisfied.
\begin{enumerate}
\item Both $w\cat{M}$ and $c\cat{M}$ contain the isomorphisms of $\cat{M}$ and $c\cat{M}$ is contained in $w\cat{M}$.
\item The weak equivalences satisfy the two-out-of-three property.
\item The cobase change of a trivial cofibration along any map exists and is a trivial cofibration.
\item There are three functors $c$, $w$ and $s$ from $w\cat{M}^{[1]}$ to $w\cat{M}^{[1]}$ such that  for each weak equivalence $f$ we have $f=w(f)\circ c(f)$, $w(f)\circ s(f)=\id$ and $c(f)$ and $s(f)$ are in $c\cat{M}^{[1]}$.
\end{enumerate} 
\end{defi}

We shall use the symbol $\we$ to denote weak equivalences and $\wc$ to denote trivial cofibrations.

\begin{rem}
Note that we require functoriality of the factorization. We believe that this axiom could be weakened to simply requiring the existence of a factorization of this type and our main result would remain valid. However this would make the proofs more technical.
\end{rem}

\begin{rem}
One can also introduce the notion dual to that a partial Brown category. We suggest the names partial Brown category of cofibrant objects and partial Brown category of fibrant objects when one needs to distinguish the two notions. All of the results in this paper admit a dual version that holds for partial Brown categories of fibrant objects.
\end{rem}

\begin{defi}
A \textbf{right exact functor} between PBCs $\cat{M}$ and $\cat{N}$ is a functor $f:\cat{M}\to\cat{N}$ sending weak equivalences to weak equivalences, trivial cofibrations to trivial cofibrations and pushout squares of spans of the form $\bullet\longleftarrow\bullet\wc\bullet $ to pushout squares. A \textbf{right exact equivalence} is a right exact functor which is a weak equivalence between the underlying relative categories. 
\end{defi}

We denote by $\cat{PBC}$ the relative category whose objects are small PBCs, morphisms are right exact functors and weak equivalences are right exact equivalences.

\begin{prop}\label{factorization}
Let $\cat{C}$ be a Brown category of cofibrant objects with a functorial cylinder object. Then $\cat{C}$ with the induced notion of weak equivalences and trivial cofibrations is a partial Brown category.
\end{prop}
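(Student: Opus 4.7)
The plan is to verify the four axioms of a PBC directly from the Brown axioms. Axioms (1), (2), and (3) are immediate: $w\cat{C}$ and $c\cat{C}$ are subcategories each containing the isomorphisms, trivial cofibrations are by definition weak equivalences, the two-out-of-three property is assumed, and the trivial-cofibration cobase-change axiom of a Brown category is exactly (3).

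For axiom (4) I would use the mapping cylinder. Fix functorial pushouts in $\cat{C}$, and given a weak equivalence $f:X\we Y$, form $\on{Cyl}(f):=Y\sqcup_X(X\otimes I)$, where $X\hookrightarrow X\otimes I$ is the endpoint $i_0$ of the functorial cylinder. Set $s(f):Y\wc\on{Cyl}(f)$ to be the pushout map (a trivial cofibration by axiom (3) applied to $i_0$), set $w(f):\on{Cyl}(f)\to Y$ to be the map induced from $f\circ\sigma$ and $\id_Y$ via the universal property (where $\sigma:X\otimes I\we X$ is the cylinder collapse), and set $c(f):X\to\on{Cyl}(f)$ to be the composite $X\xrightarrow{i_1} X\otimes I\to\on{Cyl}(f)$. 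By construction $f=w(f)\circ c(f)$ and $w(f)\circ s(f)=\id_Y$, so two-out-of-three gives that $w(f)$, and then $c(f)$, are weak equivalences. It remains to check that $c(f)$ is a cofibration; for this I would decompose it as $X\hookrightarrow Y\sqcup X\hookrightarrow\on{Cyl}(f)$, where the first map is the cobase change of the cofibration $\varnothing\hookrightarrow Y$ along $\varnothing\to X$ and the second is the cobase change of the cylinder cofibration $X\sqcup X\wc X\otimes I$ along $f\sqcup\id_X$, so both are cofibrations and hence so is their composite.

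Functoriality of $c$, $w$, $s$ on objects is built in from functoriality of the cylinder and of the chosen pushouts. The step I expect to be the main obstacle is the functoriality on morphisms: given a commutative square of weak equivalences from $f$ to $f'$, we must check that the induced morphism $\on{Cyl}(f)\to\on{Cyl}(f')$ is itself a weak equivalence, so that the three constructions really land in $w\cat{C}^{[1]}$. I would deduce this from the standard gluing lemma for Brown categories of cofibrant objects: a map of pushout spans $\bullet\leftarrow\bullet\wc\bullet$ whose three component maps are weak equivalences induces a weak equivalence on pushouts.
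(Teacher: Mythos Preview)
Your proof is correct and is exactly the paper's approach: the paper simply cites Brown's factorization lemma for the mapping cylinder factorization $m\to (m\otimes I)\sqcup^m n\to n$, which you have re-derived in detail. For the functoriality concern you flag as the main obstacle, the gluing lemma works but is unnecessary: since $s(f):Y\wc\on{Cyl}(f)$ is natural in $f$ and always a weak equivalence, two-out-of-three applied to the naturality square for $s$ immediately gives that the induced map $\on{Cyl}(f)\to\on{Cyl}(f')$ is a weak equivalence.
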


\begin{proof}
Only the fourth axiom is not obvious. Let us denote by $m\mapsto m\otimes I$ the functorial cylinder object in $\cat{C}$ (the notation does not imply any kind of monoidal structure on $\cat{C}$). Any map $f:m\to n$ can be factored as $m\to (m\otimes I)\sqcup^mn\to n$ where the first map is a cofibration and the second map is a weak equivalence. Moreover, the weak equivalence $(m\otimes I)\sqcup^mn\to n$ has a section given by the obvious map from $n$ to the pushout. This section is a trivial cofibration. The proof of these facts can be found in~\cite[Factorization lemma, p.421]{brownabstract}.
\end{proof}

\begin{exa}
We now list various ways of constructing PBCs:
\begin{itemize}
\item By Proposition~\ref{factorization}, any Brown category (with functorial cylinder) is a PBC. In particular, the category of cofibrant objects of a model category $\cat{M}$ with functorial factorizations (or even a left derivable category as defined in~\cite{cisinskicategories}) is a PBC.
\item The structure of a PBC is stable upon taking a homotopically replete full subcategory (that is a full subcategory on a set of objects which is closed under weak equivalences).
\item If $f:\cat{C}\to\cat{D}$ is an equivalence of categories and one of $\cat{C}$ and $\cat{D}$ is a PBC, then there is a unique PBC structure on the other category that makes the functor $f$ right exact.
\item If $\cat{M}$ and $\cat{N}$ are PBCs, then the coproduct $\cat{M}\sqcup \cat{N}$ has a PBC structure in which a map is a cofibration or weak equivalence if it comes from a cofibration or weak equivalence in $\cat{M}$ or $\cat{N}$. Clearly $\cat{M}\sqcup \cat{N}$ is the coproduct in $\cat{PBC}$. Note that even if $\cat{M}$ and $\cat{N}$ are Brown categories, the coproduct $\cat{M}\sqcup\cat{N}$ fails to have an initial object. In particular, it cannot be given a Brown category structure.
\item  If $\cat{M}$ and $\cat{N}$ are PBCs, then the product $\cat{M}\times \cat{N}$ has a PBC structure in which the cofibrations and weak equivalences are the products of cofibrations or weak equivalences. This makes $\cat{M}\times\cat{N}$ into the product of $\cat{M}$ and $\cat{N}$ in $\cat{PBC}$.
\item If $(C,wC)$ is a small relative category and $\cat{M}$ is a PBC, then the category $\cat{M}^{(C,wC)}$ of relative functors is a PBC if we give it the levelwise weak equivalences and cofibrations.
\end{itemize}
\end{exa}

\begin{prop}\label{prop: any rel cat is a PBC}
Any small relative category $(C,wC)$ is weakly equivalent to the underlying relative category of a partial Brown category.
\end{prop}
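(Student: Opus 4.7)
The plan is to exhibit $(C,wC)$ as a full relative subcategory of a combinatorial simplicial model category and invoke Proposition~\ref{factorization}. Let $\cat{M}$ denote the projective model structure on simplicial presheaves $\S^{C\op}$, left Bousfield localized at the set of maps $\{y(f):f\in wC\}$, where $y:C\to\S^{C\op}$ is the Yoneda embedding (valued in discrete simplicial presheaves). Then $\cat{M}$ is a combinatorial simplicial model category with functorial factorizations, and representables are cofibrant since they are projective cofibrant and left Bousfield localization does not change the cofibrations.

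Let $\cat{M}^c\subseteq\cat{M}$ be the full subcategory of cofibrant objects, inheriting weak equivalences and trivial cofibrations from $\cat{M}$. The simplicial enrichment supplies a functorial cylinder $X\mapsto X\otimes\Delta^1$ for cofibrant objects (using the pushout--product axiom applied to $\partial\Delta^1\to\Delta^1$), so $\cat{M}^c$ is a Brown category of cofibrant objects with functorial cylinder, and by Proposition~\ref{factorization} it carries an induced PBC structure. Yoneda then defines a morphism of relative categories $y:(C,wC)\to(\cat{M}^c,w\cat{M}^c)$, since the maps in $wC$ are formally inverted in the Bousfield localization.

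It remains to show that $y$ is a weak equivalence of relative categories. By~\cite{barwickcharacterization}, this reduces to $y$ inducing a Dwyer--Kan equivalence on hammock localizations. The composite $C\to\cat{M}^c\hookrightarrow\cat{M}$ is, up to canonical natural weak equivalence, the universal map considered by Dugger in his \emph{universal homotopy theories} framework, where it is proved that the Bousfield localization of $\S^{C\op}$ at the maps in $wC$ presents the $\infty$-categorical localization of $(C,wC)$. Moreover $\cat{M}^c\hookrightarrow\cat{M}$ is a weak equivalence of relative categories because functorial cofibrant replacement gives a homotopy inverse that preserves weak equivalences. Chaining these equivalences yields that $y$ itself is a weak equivalence of relative categories.

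The main obstacle is collecting and adapting the references: specifically, Dugger's universal presentation theorem must be invoked in the relative-category setting, and one must verify that the resulting $\infty$-categorical equivalence translates back to a weak equivalence of relative categories via the Barwick--Kan characterization. A more hands-on alternative would be to compute the hammock localization of $(\cat{M}^c,w\cat{M}^c)$ directly and compare it to a simplicial resolution of $(C,wC)$, but this would require reproving much of Dugger's machinery.
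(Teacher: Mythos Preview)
Your overall strategy is very close to the paper's: both embed $(C,wC)$ via Yoneda into a Bousfield localization of simplicial presheaves on $C$ and extract a PBC from the resulting model structure. The paper uses the injective structure rather than the projective one, but the essential difference lies elsewhere.

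The genuine gap is your claim that $y:(C,wC)\to(\cat{M}^c,w\cat{M}^c)$ is a weak equivalence of relative categories. This is false. The relative category $(\cat{M}^c,w\cat{M}^c)$, equivalently $(\cat{M},w\cat{M})$, presents the $\infty$-category of \emph{presheaves} on $L(C,wC)$, not $L(C,wC)$ itself. Dugger's universal homotopy theories says precisely this: projective $\S^{C\op}$ is the free homotopy cocompletion of $C$, and Bousfield localizing at $y(wC)$ gives the free cocompletion of the localization, which is vastly larger. For a minimal counterexample take $C=\ast$ with $wC$ trivial: then $L(C,wC)\simeq\ast$, while your $\cat{M}=\S$ presents the $\infty$-category of spaces, so $y$ cannot be a weak equivalence. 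The sentence attributing the opposite to Dugger misreads what his theorem says.

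The repair is exactly what the paper does and what your construction omits: after forming the localized model category, one must cut down to the smallest homotopically replete full subcategory containing the image of Yoneda (the paper calls it $Ey$). This subcategory still inherits a PBC structure, since the PBC axioms are stable under passing to homotopically replete full subcategories, and now essential surjectivity onto the homotopy category is built in; the substantive point becomes full faithfulness of Yoneda on derived mapping spaces, for which the paper cites \cite{barwickpartial}. This restriction also addresses the size issue you would otherwise face, since $\cat{M}^c$ is not small.
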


\begin{proof}
We start from the category $\cat{M}=\S^{C\op}$ of simplicial presheaves over $C$ and equip it with the injective model structure. Then we can form the left Bousfield localization of $\cat{M}$ with respect to the all the maps between representable presheaves $f_*:C(-,c)\to C(-,c')$ for $f:c\to c'$ a weak equivalence in $C$. This gives a model category $L_w\cat{M}$ in which the cofibrations are the monomorphisms and the fibrant objects are the relative functors from $C\op$ to $\S$ that are also fibrant in the injective model structure on $\S^{C\op}$. Then we can consider the category $Ey$ to be the smallest homotopically replete subcategory of $L_w\cat{M}$ containing the image of the Yoneda's embedding. We equip $Ey$ with its PBC structure inherited from the model structure of $L_w\cat{M}$. The fact the the map $(C,wC)\to Ey$ is a weak equivalence of relative categories is proved in~\cite[3.3.]{barwickpartial}.
\end{proof}

The main reason for our slightly unconventional factorization axiom is for Ken Brown's lemma to remain valid.

\begin{lemm}\label{Ken Brown's lemma}
Let $\cat{M}$ be a PBC, $(\cat{X},w\cat{X})$ be a relative category and $F:w\cat{M}\to \cat{X}$ be a functor. If $F$ sends trivial cofibrations to weak equivalences, then $F$ sends all weak equivalences to weak equivalences.
\end{lemm}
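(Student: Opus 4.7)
The plan is essentially the classical Ken Brown's lemma argument, adapted to the factorization structure provided by axiom (4) of a PBC. Given a weak equivalence $f\colon m\to n$ in $\cat{M}$, axiom (4) supplies a factorization $f=w(f)\circ c(f)$ with $c(f)$ a trivial cofibration, together with a map $s(f)\colon n\to x$ which is also a trivial cofibration and satisfies $w(f)\circ s(f)=\id_n$. Applying two-out-of-three in $\cat{M}$ to $f=w(f)\circ c(f)$ shows that $w(f)$ is itself a weak equivalence, so all three of $c(f), w(f), s(f)$ lie in $w\cat{M}$ and are therefore in the domain of $F$.

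Next I would push the relations through $F$. By the standing hypothesis, $F(c(f))$ and $F(s(f))$ are weak equivalences in $\cat{X}$. The identity $w(f)\circ s(f)=\id_n$ gives $F(w(f))\circ F(s(f))=\id$, and applying two-out-of-three in $\cat{X}$ (which I take as an implicit convention on the target relative category; without it the statement is untenable) forces $F(w(f))$ to be a weak equivalence. Finally $F(f)=F(w(f))\circ F(c(f))$ is a composite of two weak equivalences in $\cat{X}$, and since $w\cat{X}$ is by definition a subcategory this composite lies in $w\cat{X}$.

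The entire argument is driven by the strengthened form of axiom (4): it is precisely the presence of the splitting $s(f)$, rather than the mere existence of a factorization, that allows us to reach $w(f)$ using only the information that $F$ respects trivial cofibrations. No substantive obstacle is expected; the one point worth flagging is the tacit assumption that the target relative category $(\cat{X},w\cat{X})$ satisfies the two-out-of-three property, which is standard in this literature and indispensable for any statement of Ken Brown type.
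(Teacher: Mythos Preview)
Your argument is correct and follows the same route as the paper's proof: factor $f$ via axiom~(4), use the section $s(f)$ to force $F(w(f))$ to be a weak equivalence, and conclude. You are in fact slightly more careful than the paper, which glosses over both the verification that $w(f)\in w\cat{M}$ and the implicit use of two-out-of-three in the target $(\cat{X},w\cat{X})$; your flagging of the latter is entirely fair, as the paper's definition of relative category does not build it in.
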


\begin{proof}
Let $w$ be a weak equivalence in $\cat{M}$. Then by the fourth axiom of Definition~\ref{defi PBC}, the map $w$ can be factored as $w=v\circ i$ with $i$ a trivial cofibration and $v$ a weak equivalence which admits a trivial cofibration as a section. The map $F(i)$ is thus a weak equivalence. The map $F(v)$ is also an equivalence because it has a section which is a weak equivalence. Therefore $F(w)$ is a weak equivalence.
\end{proof}

\section{Segal spaces and categories}

The category $\Delta$ is the full subcategory of $\Cat$ spanned by the totally ordered sets $[n]$ with $n\geq 0$. 

Let $\cat{C}$ be a category with finite limits. Given any functor $X:\Delta\op\to\cat{C}$, we can form the limit of the following diagram
\begin{equation}\label{Segal diagram}
X_0\xleftarrow{d_0} X_1\xrightarrow{d_1} X_0\xleftarrow{d_0} X_1\xrightarrow{d_1}
\ldots X_0\xleftarrow{d_0} X_1\xrightarrow{d_1} X_0
\end{equation}
that we denote by 
\[X_1\times_{X_0}X_1\times_{X_0}\ldots\times_{X_0}X_1\]
Let us consider the $n$ maps $[1]\to [n]$ mapping $0$ to $i$ and $1$ to $i+1$ for $0\leq i\leq n-1$ and the $n+1$ maps $[0]\to[n]$ sending $0$ to $i$ for $0\leq i\leq n$. These maps assemble into a single map
\[c_n:X_n\to X_1\times_{X_0}X_1\times_{X_0}\ldots\times_{X_0}X_1\]

If $\cat{C}$ is a model category (in this paper, $\cat{C}$ will always be $\S$ or $\Cat$), we denote by 
\[X_1\times_{X_0}^hX_1\times_{X_0}^hX_1 \times\ldots\times_{X_0}^hX_1\]
the homotopy limit of the diagram~\ref{Segal diagram}. This object is well-defined up to weak equivalence and comes with a map
\[d_n:X_1\times_{X_0}X_1\times_{X_0}\ldots\times_{X_0}X_1\to X_1\times^h_{X_0}X_1\times^h_{X_0}\ldots\times^h_{X_0}X_1\]

\begin{defi}\label{Segal spaces}
A simplicial space $X$ is said to be a \textbf{Segal space} if for each $n\geq 2$, the map 
\[s_n:=d_n\circ c_n:X_n\to X_1\times^h_{X_0}X_1\times^h_{X_0}\ldots\times^h_{X_0}X_1\]
is a weak equivalence.

A simplicial space $X$ is said to be a Segal category if it is a Segal space and $X_0$ is a discrete simplicial set.
\end{defi}

\begin{rem}
The notion of Segal space was introduced in~\cite{rezkmodel}. However, the reader should be aware that our definition is slightly different than the one in~\cite{rezkmodel}. More precisely, a Segal space in Rezk's sense is a Segal space in the sense of~\ref{Segal spaces} which is moreover fibrant in the Reedy model structure of $\S^{\Delta\op}$.
\end{rem}

The maps $s_n$ appearing in the above definition will be called the $n$-th Segal map. Note that if $X_0$ is discrete, the map $d_n$ is always a weak equivalence since products in $\S$ are automatically derived products. Therefore, the map $s_n$ is a weak equivalence if and only if the map $c_n$ is a weak equivalence.

Let $X$ be a simplicial space (resp. a simplicial category) with $X_0$ a discrete space (resp. a discrete category). There is a map $p_n:X_n\to X_0^{n+1}$ whose $i$-th factor is the map $X_n\to X_0$ induced by the map $[0]\to[n]$ sending $0$ to $i$. Given $(x_0,\ldots,x_n)\in X_0^{n+1}$, we denote by $X(x_0,\ldots,x_n)$ the fiber of $p_n$ over $(x_0,\ldots,x_n)$. Note that in this situation, this is also a homotopy fiber.

We denote by $\cat{CSS}$ the model structure of complete Segal spaces constructed in~\cite{rezkmodel}. This is a model structure on $\S^{\Delta\op}$ which is a model for the $\infty$-category of $\infty$-categories. We use the phrase \textbf{Rezk equivalences} for the weak equivalences in $\cat{CSS}$. The following lemma gives us a criterion for a map to be a Rezk equivalence.

\begin{lemm}\label{lemm: criterion Rezk equivalence}
Let $f:X\to Y$ be a morphism between Segal spaces. Then $f$ is a Rezk equivalence if it satisfies the following two conditions.
\begin{itemize}
\item The square
\[
\xymatrix{
X_1\ar[d]_{(d_0,d_1)}\ar[r]^{f_1}& Y_1\ar[d]^{(d_0,d_1)}\\
X_0\times X_0\ar[r]_{f_0\times f_0}& Y_0\times Y_0
}
\]

is homotopy cartesian.

\item The induced map $\pi_0(X_0)\to\pi_0(Y_0)$ is surjective.
\end{itemize}
\end{lemm}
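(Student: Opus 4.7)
The plan is to realize $f$ as a Dwyer-Kan equivalence of Segal spaces and then invoke the classical theorem (see~\cite{rezkmodel} and~\cite{bergnerthree}) that, between Segal spaces, Rezk equivalences coincide with Dwyer-Kan equivalences. Recall that the associated homotopy category $\on{Ho}(X)$ of a Segal space $X$ has object set $\pi_0(X_0)$, morphism sets $\pi_0 X(x_0,x_1)$, and composition induced by the Segal equivalence $X_2\we X_1\times_{X_0}^h X_1$ together with the face map $d_1\colon X_2\to X_1$. The map $f$ will qualify as a Dwyer-Kan equivalence as soon as (i) for every $(x_0,x_1)\in X_0\times X_0$, the induced map $X(x_0,x_1)\to Y(f(x_0),f(x_1))$ is a weak equivalence of simplicial sets, and (ii) the functor $\on{Ho}(f)$ is essentially surjective.

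First I would check condition (i). The homotopy cartesian square in the first hypothesis says precisely that $X_1$ is weakly equivalent to the homotopy pullback of $(d_0,d_1)\colon Y_1\to Y_0\times Y_0$ along $f_0\times f_0$. Taking homotopy fibers of the two vertical maps over each point $(x_0,x_1)\in X_0\times X_0$ yields the required equivalence $X(x_0,x_1)\we Y(f(x_0),f(x_1))$.

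For (ii), since the object set of $\on{Ho}(Y)$ is by definition $\pi_0(Y_0)$, the second hypothesis that $\pi_0(X_0)\to\pi_0(Y_0)$ be surjective is strictly stronger than essential surjectivity of $\on{Ho}(f)$: it gives literal surjectivity on objects. Combining (i) and (ii), $f$ is a Dwyer-Kan equivalence, hence a Rezk equivalence.

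The main obstacle is invoking the Dwyer-Kan characterization of Rezk equivalences correctly in this not-necessarily-complete setting. A more self-contained alternative would be to pass to the Rezk completions $\hat X$ and $\hat Y$ (which are complete Segal spaces, Rezk equivalent to $X$ and $Y$) and verify, by induction using the Segal conditions applied to each $X_n$ and $Y_n$, that hypotheses (1) and (2) force $\hat f$ to be a levelwise weak equivalence; since a map of complete Segal spaces is a Rezk equivalence if and only if it is a levelwise weak equivalence, this would also conclude the proof.
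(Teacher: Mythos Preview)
Your approach is essentially the same as the paper's: reduce to showing that $f$ is a Dwyer--Kan equivalence and invoke the identification of Rezk equivalences with Dwyer--Kan equivalences between Segal spaces. The deduction of full faithfulness from the homotopy cartesian square and of essential surjectivity from surjectivity on $\pi_0$ is exactly right.

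The technical point you flag as an ``obstacle'' is precisely what the paper handles, and it does so more directly than your proposed alternative. Recall that in this paper Segal spaces are \emph{not} assumed Reedy fibrant, so Rezk's Theorem~7.7 does not apply on the nose. The paper's fix is simply to take a Reedy fibrant replacement functor $X\mapsto X^R$: the maps $X\to X^R$, $Y\to Y^R$ are levelwise weak equivalences (hence Rezk equivalences), and both hypotheses of the lemma are manifestly invariant under levelwise weak equivalence, so $f^R$ inherits them. One is then in the Reedy fibrant setting where the Dwyer--Kan characterization applies cleanly.

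Your suggested alternative---passing to Rezk completions $\hat X,\hat Y$ and arguing that $\hat f$ is a levelwise weak equivalence ``by induction using the Segal conditions''---does not work as stated. Completion replaces $X_0$ by (roughly) the classifying space of the maximal sub-Kan-complex, so the hypotheses on $X_0$ and $X_1$ do not transfer in any obvious way to $\hat X_0$ and $\hat X_1$; in particular the Segal conditions alone will not produce a levelwise equivalence without first establishing something equivalent to the Dwyer--Kan condition. The Reedy fibrant replacement, by contrast, is a levelwise weak equivalence and preserves the hypotheses verbatim, which is why the paper takes that route.
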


\begin{proof}
Let $f$ be a map satisfying the above two conditions. Let $X\mapsto X^R$ be a fibrant replacement functor in the Reedy model structure on $\S^{\Delta\op}$. The map $X\to X^R$ and $Y\to Y^R$ are weak equivalences in $\cat{CSS}$ since the latter model category is a left Bousfield localization of $\S^{\Delta\op}$ with its Reedy model structure. Therefore, the map $f$ is a weak equivalence if and only if the map $f^R:X^R\to Y^R$ is a weak equivalence. Moreover, the map $f^R$ also satisfies the conditions of the lemma since those are invariant under levelwise weak equivalences. We may therefore assume that $X$ and $Y$ are Reedy fibrant. In that case, according to~\cite[Theorem 7.7.]{rezkmodel}, we see that $f$ is a weak equivalence if and only if it is a Dwyer-Kan equivalence. According to~\cite[Proposition 2.15]{horelmodel}, the map $f$ is a Dwyer-Kan equivalence if and only if it satisfies the first condition of the lemma and the induced map $\pi_0(X_0)/\sim\to\pi_0(Y_0)/\sim$ is a surjection where $\pi_0(X_0)/\sim$ is a certain functorial quotient of $\pi_0(X_0)$. But it is easy to see that if $\pi_0(X_0)\to\pi_0(Y_0)$ is surjective, then the map $\pi_0(X_0)/\sim\to\pi_0(Y_0)/\sim$ must be surjective as well .
\end{proof}

\begin{defi}\label{def: Tamsamani}
A simplicial category $X$ is said to be a \textbf{Tamsamani bicategory} if $X_0$ is a discrete category and the maps $c_n:X_n\to X_1\times_{X_0}X_1\times_{X_0}\ldots\times_{X_0}X_1$ are equivalences of categories for all $n\geq 2$.
\end{defi}

\begin{rem}\label{rem: bicategory}
Tamsamani bicategories should be thought of as a mild generalization of bicategories. In fact, Lack and Paoli construct in~\cite{lack2} a fully faithful $2$-nerve from bicategories to Tamsamani bicategories. Note that since equivalences of categories are in particular weak equivalences, the simplicial space obtained by applying the nerve levelwise to a Tamsamani bicategory is a Segal category.
\end{rem}

\section{Fibrillations and Quillen's theorem B}

Computing homotopy pullbacks in a right proper model category usually involves replacing one of the maps by a fibration. It has been observed by Rezk~\cite{rezkfibrations} that a weaker kind of fibrations is good enough. Those arrows are called sharp maps by Rezk, fibrillations by Barwick and Kan in~\cite{barwickquillen} and weak fibrations by Cisinski in~\cite{cisinskiinvariance}. We use Barwick and Kan's terminology. Let $\cat{X}$ be a right proper model category. The class of \textbf{fibrillations} is defined to be the largest class of maps in $\cat{X}$ with the property that a pullback of a weak equivalence along a fibrillation is a weak equivalence and the pullback of a fibrillation is a fibrillation. Note that any fibration is a fibrillation. Moreover, we have the following proposition.

\begin{prop}{\cite[Proposition 2.7.]{rezkfibrations}}\label{fibrillations and hpb}
Let $\cat{X}$ be a right proper model category and let
\[\xymatrix{
a\ar[d]\ar[r]&b\ar[d]^p\\
a'\ar[r]&b'
}
\]
be a square in which $p$ is a fibrillation. Then the square is homotopy cartesian if and only if the map $a\to a'\times_{b'}b$ is a weak equivalence. $\hfill\square$
\end{prop}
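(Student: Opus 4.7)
The plan is to compute the homotopy pullback of the cospan $a'\to b'\leftarrow b$ via a standard fibrant-replacement model and then use the two closure properties that define the class of fibrillations to compare this model with the strict pullback $a'\times_{b'}b$.

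First, using the factorization axiom of the model structure on $\cat{X}$, I would factor the map $a'\to b'$ as a weak equivalence $a'\we\tilde{a}'$ followed by a fibration $\tilde{a}'\to b'$. Since $\cat{X}$ is right proper and $\tilde{a}'\to b'$ is a fibration, the ordinary pullback $P:=\tilde{a}'\times_{b'}b$ is a model for the homotopy pullback of $a'\to b'\leftarrow b$. Thus the square in the statement is homotopy cartesian if and only if the induced map $a\to P$ is a weak equivalence.

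The main step is to show that the canonical comparison map $q:a'\times_{b'}b\to P$ is a weak equivalence. By the universal property of pullbacks, $a'\times_{b'}b$ is naturally isomorphic to $a'\times_{\tilde{a}'}P$, and under this identification $q$ is precisely the base change of the weak equivalence $a'\we\tilde{a}'$ along the projection $\pi:P=\tilde{a}'\times_{b'}b\to\tilde{a}'$. Now $\pi$ is itself a base change of the fibrillation $p:b\to b'$, hence a fibrillation by the closure of fibrillations under pullback (built into the definition). The other defining property of fibrillations then guarantees that the pullback of the weak equivalence $a'\we\tilde{a}'$ along the fibrillation $\pi$ is a weak equivalence, so $q$ is a weak equivalence.

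Combining the two previous paragraphs and applying two-out-of-three to the composite $a\to a'\times_{b'}b\we P$, the map $a\to P$ is a weak equivalence if and only if $a\to a'\times_{b'}b$ is, which is exactly the claim. The argument is essentially self-contained given the two closure properties packaged in the definition of fibrillations; the only mildly delicate point, and the one on which the rest of the proof hinges, is the diagram-chasing identification of the comparison map $q$ as a base change of $a'\we\tilde{a}'$ lying over the fibrillation $\pi$.
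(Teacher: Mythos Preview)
Your argument is correct. Note, however, that the paper does not give a proof of this proposition: it is stated with a citation to Rezk and closed immediately with a $\square$. So there is no ``paper's own proof'' to compare against; you have supplied the standard argument that the reference contains. The key step---recognising the comparison map $a'\times_{b'}b\to \tilde a'\times_{b'}b$ as the base change of the weak equivalence $a'\we\tilde a'$ along the pulled-back fibrillation $\pi$---is exactly the point of the two closure properties in the definition, and your use of the pasting law for pullbacks to make this identification is clean. One small remark: your claim that $P=\tilde a'\times_{b'}b$ computes the homotopy pullback uses right properness (so that replacing only one leg by a fibration suffices, without first making $b'$ fibrant); you invoke this correctly but it is worth flagging since it is the only place the hypothesis enters.
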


We will be mainly interested in the fibrillations in $\Cat$ with the Thomason model structure. An elaboration of Quillen's theorem B due to Barwick and Kan gives a very efficient way to produce fibrillations in $\Cat$.

Given a functor $F:\cat{X}\op\to\Cat$, we define its Grothendieck construction $\on{Gr}(F)$. The objects of this category are pairs $(X,a)$ with $X$ an object of $\cat{X}$ and $a$ an object of $F(X)$. The set of morphisms from one such object $(X,a)$ to another object $(Y,b)$ is the set of pairs $(f,u)$ with $f:X\to Y$ a map in $\cat{X}$ and $u:a\to F(f)b$ a map in $F(X)$. The composition is defined in a straightforward way. The category $\on{Gr}(F)$ comes equipped with a map $\on{Gr}(F)\to\cat{X}$. 
 
Following Barwick and Kan, we say that a functor $F$ from a small category $\cat{X}$ to $\Cat$ has property $Q$ if it sends all maps in $\cat{X}$ to weak equivalences.

\begin{prop}\label{Theorem B}
Let $F:\cat{X}\op\to\Cat$ be a functor having property $Q$. Let $\on{Gr}(F)$ be the Grothendieck construction of $F$. The induced map $\on{Gr}(F)\to \cat{X}$ is a fibrillation in $\Cat$. $\hfill\square$
\end{prop}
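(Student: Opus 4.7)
The plan is to verify directly that the projection $p\colon \on{Gr}(F)\to \cat{X}$ lies in some class of maps satisfying the two closure properties which define the fibrillations, namely stability under pullback and the property that weak equivalences pull back to weak equivalences. I would take that class to be
\[
\mathcal{F}:=\{\,\on{Gr}(G)\to \cat{Z}\ :\ G\colon\cat{Z}\op\to\Cat\text{ has property }Q\,\}.
\]

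First I would establish that $\mathcal{F}$ is stable under pullback along arbitrary functors. Given any $h\colon\cat{Y}\to\cat{X}$, a direct inspection of the defining data shows that there is a canonical isomorphism
\[
\on{Gr}(F)\times_{\cat{X}}\cat{Y}\ \cong\ \on{Gr}(F\circ h\op)
\]
over $\cat{Y}$: an object on the left is a triple $(Y,x,a)$ with $h(Y)=x$ and $a\in F(x)$, which is the same as a pair $(Y,a)$ with $a\in F(h(Y))$, and morphisms correspond under the same rule. Since property $Q$ is preserved by precomposition with any functor, $F\circ h\op$ again has property $Q$, so the pullback of $p$ lies in $\mathcal{F}$. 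This takes care of the second closure axiom.

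Second, I would check that weak equivalences pull back to weak equivalences along members of $\mathcal{F}$. By the previous step, this reduces to the following single statement: if $F\colon \cat{X}\op\to \Cat$ has property $Q$ and $g\colon\cat{X}'\to\cat{X}$ is a Thomason weak equivalence, then $\on{Gr}(g\op{}^{*}F)\to \on{Gr}(F)$ is a weak equivalence in $\Cat$. To prove this I would invoke Thomason's theorem to identify $N\on{Gr}(F)$ with $\on{hocolim}_{\cat{X}\op}NF$ and likewise for $g\op{}^{*}F$; because $F$ has property $Q$, the diagram $NF$ is termwise a diagram of weak equivalences, so its hocolim is (up to weak equivalence) the total space of a quasi-fibration over $N\cat{X}$ whose fibers all have a common homotopy type. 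The map induced by $g$ replaces the base by the weakly equivalent space $N\cat{X}'$ while keeping the fiber type unchanged, yielding a weak equivalence on total spaces.

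The main obstacle is the last comparison of homotopy colimits, which is exactly the content of Quillen's Theorem B. Rather than redo that argument from scratch, the cleanest route is to cite Barwick--Kan's reformulation of Theorem B in \cite{barwickquillen}, of which the proposition is essentially a direct repackaging once Step 1 has been observed; in fact the authors' phrasing in terms of ``property $Q$'' and Grothendieck constructions is arranged precisely so that their theorem gives this conclusion immediately.
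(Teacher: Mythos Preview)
Your outline is logically sound --- exhibiting a class $\mathcal{F}$ closed under pullback and along which weak equivalences pull back to weak equivalences does show $\mathcal{F}$ is contained in the fibrillations --- but the route is more circuitous than the paper's. The paper simply invokes \cite[Lemma~9.7]{barwickquillen} directly: that lemma already asserts that the Grothendieck projection is a fibrillation, so there is no need to re-verify the two closure properties. Your Step~2 (that $\on{Gr}(g\op{}^{*}F)\to\on{Gr}(F)$ is a weak equivalence when $g$ is) is precisely what Quillen's Theorem~B provides, as you yourself concede; once you decide to cite Barwick--Kan at the end, the detour through $\mathcal{F}$, Thomason's identification, and the quasi-fibration heuristic becomes redundant rather than an independent argument.

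There is, however, one genuine point you omit which is in fact the entire content of the paper's proof. Barwick and Kan state their lemma for \emph{covariant} functors $G\colon\cat{X}\to\Cat$ and their version of the Grothendieck construction, whereas the present paper works with contravariant $F\colon\cat{X}\op\to\Cat$ and a different construction $\on{Gr}(F)$. The paper bridges this by checking that $\on{Gr}(F)\to\cat{X}$ is isomorphic to the opposite of Barwick--Kan's $\on{Gr}'(F\op)\to\cat{X}\op$, that $(-)\op$ preserves weak equivalences (so $F\op$ again has property~$Q$), and that $(-)\op$ preserves fibrillations (since it preserves and reflects both pullback squares and weak equivalences). Without this reduction, a bare citation of \cite{barwickquillen} does not literally apply to the statement as formulated here.
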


\begin{proof}
The proof is essentially done in~\cite[Lemma 9.7.]{barwickquillen}. Our definition of the Grothendieck construction differs from the one in Barwick and Kan. They work with covariant functors as opposed to contravariant functors in our case. However, denoting by $\on{Gr}'$ the Grothendieck construction used by Barwick and Kan, it is easy to see that for $\cat{X}\op\to\Cat$ a functor, the map $\on{Gr}(F)\to\cat{X}$ is isomorphic to the opposite of the map $\on{Gr}'(F\op)\to\cat{X}\op$ where $F\op$ denotes the functor $\cat{X}\op\to\Cat$ sending $X$ to $F(X)\op$. 

Now, we observe that the functor $(-)\op:\Cat\to\Cat$ preserves weak equivalences. It follows that $F\op$ has property $Q$. Therefore, by~\cite[Lemma 9.7.]{barwickquillen}, the map $$\on{Gr}'(F\op)\to\cat{X}\op$$ is a fibrillation. Now the class of fibrillation is preserved by the functor $(-)\op$ since it preserves and reflects pullback squares and weak equivalences.
\end{proof}

\section{A Segal space associated to a PBC}

For any integer $n$, we construct a category $T_n$. This is the full subcategory of $[n]\op\times[n]$ spanned by objects $(p,q)$ with $p\leq q$.

\begin{cons}\label{cons CM}
Let $\cat{M}$ be a PBC. We define a category $\CC_n(\cat{M})$. The objects of this category are the functors $m:T_n\to \cat{M}$ satisfying the following conditions:
\begin{enumerate}
\item Maps of the form $(g,id)$ for $g$ a map in $[n]\op$ are sent to trivial cofibrations.
\item For any $p<q<n$, the square
\[\xymatrix{
m_{p+1,q}\ar[d]\ar[r]& m_{p,q+1}\ar[d]\\
m_{p+1,q+1}\ar[r] &m_{p,q+1}
 }
\]
is a pushout square. 
\end{enumerate}
The morphisms in $\CC_n(\cat{M})$ are the natural transformations that are objectwise in $w\cat{M}$.
\end{cons}

For example an object in $\CC_2(\cat{M})$ is a diagram of the form 
\[\xymatrix{
  &  &m_{02}&  &  \\
  &m_{01}\ar[ur]& &m_{12}\ar@{_{(}->}[ul]^\simeq& \\
m_{00}\ar[ur]& &m_{11}\ar[ur]\ar@{_{(}->}[ul]^\simeq& &m_{22}\ar@{_{(}->}[ul]^\simeq
}
\]
in which the square is a pushout square. 

The assignment $[n]\to T_n$ is a cosimplicial category. This makes $\CC_n(\cat{M})$ into a simplicial category. For instance the three face maps from $\CC_2(\cat{M})$ to $\CC_1(\cat{M})$ send the object above respectively to 
$m_{00}\lto m_{01}\lwc m_{11}$, $m_{00}\lto m_{02}\lwc m_{22}$ and $m_{11}\lto m_{12}\lwc m_{22}$. The degeneracies are obtained by inserting identities. We define $C(\cat{M})$ to be the simplicial space obtained from $\CC(\cat{M})$ by applying the nerve objectwise. 

Let $N^R(\cat{M})$ be Rezk classifying diagram of $\cat{M}$. This is a simplicial space whose space of $n$-simplices $N^R_n(\cat{M})$ is the nerve of the category $\NN^R_n(\cat{M})$ whose objects are functors $[n]\to \cat{M}$ and morphisms are natural transformations which are objectwise weak equivalences. 

We now construct a map $\NN^R(\cat{M})\to \CC(\cat{M})$. There is an inclusion $[k]\to T_k$ sending $p$ to $(0,p)$. This induces a restriction map $\CC_k(\cat{M})\to \NN^R_k(\cat{M})$. These restriction maps are not compatible with the simplicial structure. However, for each $k$, the restriction map $\CC_k(\cat{M})\to \NN^R_k(\cat{M})$ has a left adjoint. For instance, in degree $2$, this left adjoint sends $c_0\lto c_1\lto c_2$ to
\[\xymatrix{
  &  &c_{2}&  &  \\
  &c_{1}\ar[ur]& &c_{2}\ar[ul]^\id& \\
c_{0}\ar[ur]& &c_{1}\ar[ur]\ar[ul]^\id& &c_{2}\ar[ul]^\id
}
\] 

These left adjoints are compatible with the simplicial structure. More precisely, they induce a natural transformation $\NN^R(\cat{M})\to \CC(\cat{M})$ that is functorial in $\cat{M}$. Hence we have the following proposition.

\begin{prop}\label{CM is correct}
The map $\NN^R(\cat{M})\to\CC(\cat{M})$ is a levelwise weak equivalence. $\hfill\square$
\end{prop}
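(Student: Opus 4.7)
The plan is to reduce the statement to showing, for each fixed simplicial degree $k$, that the functor $L_k : \NN^R_k(\cat{M}) \to \CC_k(\cat{M})$ (the left adjoint described in the preceding paragraph) is a weak equivalence of categories in the Thomason sense, i.e.\ that $N(L_k)$ is a weak equivalence of simplicial sets. Since the paper already defines the map $\NN^R(\cat{M}) \to \CC(\cat{M})$ to be $L_\bullet$ levelwise, establishing this in every degree yields the levelwise equivalence after applying $N$.

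The key input is the existence of the adjunction $L_k \dashv R_k$, where $R_k : \CC_k(\cat{M}) \to \NN^R_k(\cat{M})$ is the restriction along $[k] \hookrightarrow T_k$, $p \mapsto (0,p)$. I would first check that $L_k$ is well-defined: the constant-in-$p$ diagram $m_{p,q} := c_q$ (with identity vertical maps and the original horizontal maps) lies in $\CC_k(\cat{M})$ because identities are isomorphisms (hence trivial cofibrations by axiom~(1) of Definition~\ref{defi PBC}), and any square of the required shape whose vertical arrows are identities is automatically a pushout. Then the adjunction $L_k \dashv R_k$ is straightforward: a natural transformation $L_k(c_\bullet) \to m$ over $T_k$ is determined by its restriction to the top row $\{(0,p)\}$, since for $p > 0$ the component $m_{p,q} \to c_q$ must agree with the top-row component after precomposition with the identity $c_q = (L_k c_\bullet)_{p,q} \to (L_k c_\bullet)_{0,q} = c_q$. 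Morphisms on both sides are defined levelwise-in-$w\cat{M}$, so this bijection restricts correctly to morphisms of $\CC_k$ and $\NN^R_k$.

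The second step is to invoke the standard fact that any adjunction between ordinary categories induces a homotopy equivalence on nerves: the unit $\eta : \id \Rightarrow R_k L_k$ and counit $\varepsilon : L_k R_k \Rightarrow \id$ are natural transformations, and a natural transformation between functors of categories induces a simplicial homotopy between the corresponding maps on nerves. Hence $N(R_k) \circ N(L_k)$ is homotopic to $\id_{N(\NN^R_k(\cat{M}))}$ and $N(L_k) \circ N(R_k)$ is homotopic to $\id_{N(\CC_k(\cat{M}))}$, so $N(L_k)$ is a weak equivalence as required.

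I do not expect a serious obstacle: the only mildly technical point is verifying the universal property of $L_k$ coherently across $k$, but since the paper already uses these left adjoints to produce the natural transformation $\NN^R(\cat{M}) \to \CC(\cat{M})$, this bookkeeping is implicit in Construction~\ref{cons CM}. The proof is therefore essentially a one-line application of the ``adjoints induce weak equivalences of classifying spaces'' principle.
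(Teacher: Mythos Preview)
Your approach is exactly the paper's intended one (the proposition carries a $\square$ because the adjunction recorded just before it immediately yields a homotopy equivalence on nerves), but the direction of the adjunction is reversed: one actually has $R_k \dashv L_k$, not $L_k \dashv R_k$ (the paper's phrase ``left adjoint'' is a slip that you have inherited). Your verification that a map $\alpha\colon L_k c \to m$ is determined by its restriction to the top row $\{(0,q)\}$ does not go through: naturality along $(p,q)\to(0,q)$ in $T_k$ gives $\alpha_{0,q} = (m_{p,q}\wc m_{0,q})\circ\alpha_{p,q}$, which neither recovers $\alpha_{p,q}$ from $\alpha_{0,q}$ (the trivial cofibration need not be monic) nor shows that an arbitrary weak equivalence $c_q\to m_{0,q}$ lifts through $m_{p,q}$.

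The adjunction that \emph{does} hold is $\Hom_{\CC_k}(m, L_k c)\cong\Hom_{\NN^R_k}(R_k m, c)$: for a transformation $m\to L_k c$, naturality along $(p,q)\to(0,q)$ forces the component $m_{p,q}\to c_q$ to equal the composite $m_{p,q}\wc m_{0,q}\to c_q$, so restriction to the top row is bijective in this direction; the unit $m\to L_kR_k m$ is the objectwise trivial cofibration $m_{p,q}\wc m_{0,q}$ (hence a morphism of $\CC_k$) and the counit $R_kL_k c\to c$ is the identity. Since an adjunction in either direction induces a homotopy equivalence on nerves, your concluding step is unaffected and the argument stands once the direction is corrected.
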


Another important feature of the simplicial space $C(\cat{M})$ is the following proposition:

\begin{prop}\label{prop: CM is a Segal space}
The simplicial space $C(\cat{M})$ is a Segal space.
\end{prop}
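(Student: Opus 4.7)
The plan is to verify the Segal condition through two reductions. First, the Segal map $\Psi\colon\CC_n(\cat{M})\to\CC_1(\cat{M})\times_{\CC_0(\cat{M})}\cdots\times_{\CC_0(\cat{M})}\CC_1(\cat{M})$, which restricts a diagram $m\colon T_n\to\cat{M}$ to its bottom zig-zags $m_{p,p}\to m_{p,p+1}\lwc m_{p+1,p+1}$, is an equivalence of categories. Second, this strict iterated pullback coincides, after applying the nerve, with the iterated homotopy pullback in $\S$. Combining the two, and using that $N\colon\Cat\to\S$ preserves strict limits, yields the Segal condition for $C(\cat{M})=N(\CC(\cat{M}))$.

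For the first reduction, I construct a pseudo-inverse $\Phi$ to $\Psi$ by iterated pushout: fix a choice of pushout for each span and, given a compatible tuple of zig-zags, define $m_{p,q}$ for $q-p\geq 2$ inductively (on $q-p$) as the chosen pushout of $m_{p,q-1}\leftarrow m_{p+1,q-1}\to m_{p+1,q}$. Axiom (3) of Definition~\ref{defi PBC} ensures the pushouts exist and that the induced maps $m_{p+1,q}\to m_{p,q}$ are trivial cofibrations, so $\Phi$ lands in $\CC_n(\cat{M})$. The universal property of pushouts makes $\Phi$ strictly functorial with $\Psi\circ\Phi=\id$, and equips $\Phi\circ\Psi$ with a canonical natural isomorphism to $\id$. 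Hence $\Psi$ is an equivalence of categories and $N(\Psi)$ is a weak equivalence.

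For the second reduction, it suffices by Proposition~\ref{fibrillations and hpb}, applied iteratively and using that fibrillations are stable under pullback, to show that the face map $d_1\colon\CC_1(\cat{M})\to\CC_0(\cat{M})=w\cat{M}$ sending $(a\to b\lwc c)\mapsto a$ is a Thomason fibrillation. I realize $\CC_1(\cat{M})$ as the Grothendieck construction $\on{Gr}(F)$ of the strict functor $F\colon w\cat{M}\op\to\Cat$ where $F(a)$ is the category of spans $a\to b\lwc c$ under $a$ (morphisms: weak equivalences fixing $a$) and where $F(f)$ for $f\colon a\to a'$ is precomposition of the base map with $f$. Under this identification the projection $\on{Gr}(F)\to w\cat{M}$ is $d_1$, so by Proposition~\ref{Theorem B} it remains to check that $F$ has property Q. By Lemma~\ref{Ken Brown's lemma}, applied to $F$ viewed as a functor $w\cat{M}\to\Cat\op$, this reduces to verifying that $F(f)$ is a Thomason weak equivalence whenever $f$ is a trivial cofibration. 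In that case $F(f)$ admits a left adjoint $G(a\to b\lwc c)=(a'\to a'\sqcup_a b\lwc c)$ obtained by pushout along $f$, using axiom (3) to guarantee that the pushout exists and that the trivial cofibration $c\hookrightarrow a'\sqcup_a b$ is preserved. Since any adjunction induces a homotopy equivalence of nerves (unit and counit give simplicial homotopies), $F(f)$ is a Thomason weak equivalence.

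The main obstacle is this property Q verification. The choice of $d_1$ rather than $d_0$ is crucial: for $d_1$ the action of $F$ on morphisms is strict precomposition, requiring no strictification or appeal to the factorization axiom (4); and the pseudo-inverse $G$ turns out to be a genuine left adjoint, which lets the proof bypass any direct analysis of homotopy fibers.
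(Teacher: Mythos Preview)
Your proof is correct and follows essentially the same route as the paper's: first show the Segal map $\CC_n\to\CC_1\times_{\CC_0}\cdots\times_{\CC_0}\CC_1$ is an equivalence of categories via uniqueness of pushouts, then identify the relevant face map $\CC_1\to\CC_0$ (projection to the leftmost vertex) with the projection of a Grothendieck construction and verify property~$Q$ via Ken Brown's lemma and the pushout left adjoint, exactly as in the paper. The only discrepancy is notational---the paper calls this face map $d_0$ while you call it $d_1$---but the underlying map and the argument are identical.
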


\begin{proof}
We write $C$ for $C(\cat{M})$ and $\CC$ for $\CC(\cat{M})$. We have the map
\[c_n:\CC_n\to \CC_1\times_{\CC_0}\CC_1\times_{\CC_0}\ldots\times_{\CC_0}\CC_1\]
Since pushouts are determined up to unique isomorphism, this map is an equivalence of categories. The functor $N$ preserves homotopy pullbacks, therefore it suffices to prove that the map
\[\CC_1\times_{\CC_0}\CC_1\times_{\CC_0}\ldots\times_{\CC_0}\CC_1\to
\CC_1\times^h_{\CC_0}\CC_1\times^h_{\CC_0}\ldots\times^h_{\CC_0}\CC_1\]
is a weak equivalence. We denote the source by $\DD_n$ and the target by $\EE_n$. We proceed by induction on $n$. For $n=1$, the map $\DD_1\to\EE_1$ is a weak equivalence. Now, we claim that the map $d_0:\CC_1\to\CC_0$ is a fibrillation. Assuming this is true for the moment, we finish the proof. We assume that $\DD_n\to\EE_n$ is a weak equivalence. By Proposition~\ref{fibrillations and hpb}, this implies that 
\[\DD_{n+1}:=\DD_n\times_{\CC_0}\CC_1\to\EE_n\times_{\CC_0}\CC_1\]
is a weak equivalence and that $\EE_n\times_{\CC_0}\CC_1\to \EE_n\times^h_{\CC_0}\CC_1=\EE_{n+1}$ is a weak equivalence. 

Now we prove that $d_0:\CC_1\to\CC_0$ is a fibrillation. We construct a functor $P:\CC_0\op\to\Cat$ sending $m$ to the category whose objects are zig-zags of the form $m\lto \bullet\lwc\bullet$ and morphisms are objectwise weak equivalences inducing the identity of $m$ on the leftmost term. The category $\CC_1$ can be identified with the Grothendieck construction of $P$. Thus by Proposition~\ref{Theorem B}, it suffices to prove that $P$ has property $Q$. By Lemma~\ref{Ken Brown's lemma}, it is enough to prove that $P$ sends trivial cofibrations to weak equivalences. Let $u:m\to n$ be a trivial cofibration, then the map $P(u):P(n)\to P(m)$ has a left adjoint sending $m\lto a\lwc b$ to $n\lto n\sqcup^ma\lwc b$.
\end{proof}

\section{A fibrancy property}

In this section, we prove that $C(\cat{M})=N\CC(\cat{M})$ is a Segal space that satisfies a weak form of Reedy fibrancy. We first make a definition.

\begin{defi}
A map $f:C\to D$ between categories is said to be a \textbf{quasifibration} if for any object $d\in D$, the square
\[
\xymatrix{
C\times_D\ast\ar[r]\ar[d]&C\ar[d]^f\\
\ast\ar[r]_d&D
}
\]
is homotopy cartesian in $\Cat$.
\end{defi}

This notion is entirely analogous to the notion of quasifibration in $\S$ (defined for instance in~\cite[Definition 1.1.]{doldquasifibration}). More precisely, since $N$ preserves homotopy cartesian squares, we see that a map $f$ in $\Cat$ is a quasifibration if and only if $N(f)$ is a quasifibration in $\S$.

\begin{prop}\label{CM is fibrillant}
For each $n$, the map
\[\CC_n\to \CC_0^{n+1}\]
is a quasifibration.
\end{prop}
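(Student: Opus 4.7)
The plan is to mimic the proof of Proposition~\ref{prop: CM is a Segal space}: realise $\CC_n\to \CC_0^{n+1}$ as the Grothendieck construction of a functor with property~$Q$ and invoke Proposition~\ref{Theorem B} to deduce it is a fibrillation. Any fibrillation is in particular a quasifibration: factoring any map $A\to\CC_0^{n+1}$ as a weak equivalence followed by a fibration and using the defining property of a fibrillation twice shows that its pullback square is homotopy cartesian.

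I would define $P\colon (\CC_0^{n+1})\op \to \Cat$ by sending $(x_0,\ldots,x_n)$ to the category whose objects are tuples of zig-zags $\{x_i\to a_i\wc x_{i+1}\}_{i=0}^{n-1}$, with morphisms the natural transformations inducing the identity on each $x_i$ and a weak equivalence on each $a_i$. Iterating the pushouts from axiom~3 of a PBC extends any such tuple to a full $T_n$-diagram in $\CC_n$ with $m_{i,i}=x_i$, which both exhibits $P(x_0,\ldots,x_n)$ as equivalent to the strict fibre of $\CC_n\to\CC_0^{n+1}$ at $(x_0,\ldots,x_n)$ and identifies the Grothendieck construction of $P$ with $\CC_n$ over $\CC_0^{n+1}$. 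On morphisms, $P$ acts by precomposition of the $f_i$'s into the endpoints of the zig-zags, with a pushout correction (using the functorial factorisation of axiom~4 of a PBC) whenever a general weak equivalence at an interior vertex would break the trivial cofibration constraint $x_{i+1}\wc a_i$.

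Property~$Q$ is then verified by Ken Brown's lemma (Lemma~\ref{Ken Brown's lemma}), applied to the product PBC structure on $\cat{M}^{n+1}$: it suffices to check that $P$ carries tuples of trivial cofibrations to weak equivalences of categories. For a tuple $(f_i\colon x_i\wc y_i)_i$ of trivial cofibrations, the functor $P(f_0,\ldots,f_n)$ is honest precomposition and admits a left adjoint $L$ given on objects by
\[
L\bigl(\{x_i\to a_i\wc x_{i+1}\}_i\bigr)=\{y_i\to y_i\sqcup^{x_i}a_i\sqcup^{x_{i+1}}y_{i+1}\wc y_{i+1}\}_i,
\]
where axiom~3 of a PBC provides the iterated pushouts and ensures that both maps $y_i, y_{i+1}\to L(a)_i$ are trivial cofibrations. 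An adjoint pair induces a weak equivalence on nerves, so $P(f_0,\ldots,f_n)$ is a weak equivalence of categories, closing the verification of property~$Q$.

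The main obstacle will be the construction of $P$ on general weak equivalences at interior vertices: naive precomposition yields a map $x_{i+1}\to a_i$ that is only a weak equivalence, not a trivial cofibration. Rectifying this into a strictly functorial assignment is the sole step where axiom~4 of a PBC is genuinely needed beyond the invocation of Ken Brown's lemma; once $P$ is in place, Proposition~\ref{Theorem B} and the left-adjoint construction finish the proof.
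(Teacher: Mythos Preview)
Your plan has the right shape, and for tuples of trivial cofibrations your adjoint argument is exactly what the paper uses. The gap is in the step you flag yourself: defining a \emph{strict} functor $P\colon (\CC_0^{n+1})\op\to\Cat$ whose Grothendieck construction is $\CC_n$ (or the zig-zag category $\DD_n=\CC_1\times_{\CC_0}\cdots\times_{\CC_0}\CC_1$). If $(f_i)\colon (x_i')\to (x_i)$ is a tuple of weak equivalences, then precomposing the right leg of $x_i\to a_i\lwc x_{i+1}$ with $f_{i+1}$ only gives a weak equivalence $x'_{i+1}\to a_i$, so you must modify $a_i$. Any such ``correction'' via axiom~4 and pushouts fails to be strictly functorial under composition of the $(f_i)$'s (two successive factorisations are not equal to a single one), and once you have corrected the middle term the morphisms in $\on{Gr}(P)$ are no longer the same thing as morphisms in $\DD_n$, so the identification $\on{Gr}(P)\cong \CC_n$ breaks as well. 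In short, neither Proposition~\ref{Theorem B} nor Ken Brown's lemma can be applied until you actually have a functor on all of $\CC_0^{n+1}$, and you do not.

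The paper's fix is to introduce an auxiliary category $\EE_n$ in which the backwards arrows $m_{i+1}\to m'_i$ are only required to be \emph{weak equivalences} rather than trivial cofibrations. For $\EE_n$ the fiber functor is defined by honest precomposition, is strictly functorial, and $\EE_n$ is literally its Grothendieck construction; Proposition~\ref{Theorem B} plus Ken Brown's lemma plus your left-adjoint argument then show $\EE_n\to\CC_0^{n+1}$ is a fibrillation. The factorisation axiom is used \emph{separately}, not to rectify $P$, but to build a homotopy inverse to the inclusion $\DD_n\hookrightarrow\EE_n$ (and to its fiberwise restrictions). Combining this with the equivalence $\CC_n\simeq\DD_n$ and Proposition~\ref{fibrillations and hpb} yields the quasifibration statement. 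So the essential idea you are missing is: do the Grothendieck-construction argument on the lax version $\EE_n$ where it is painless, and handle the passage from $\CC_n$ to $\EE_n$ by a different, fiberwise argument.
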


\begin{proof}
We denote by $\DD_n$ the category $\CC_1\times_{\CC_0}\times \CC_1\ldots\times_{\CC_0}\CC_1$. This is the category whose objects are zig-zags in $\cat{M}$ of the form
\[m_0\lto m'_0\lwc m_1\lto\ldots\lwc m_{n-1}\lto m'_n\lwc m_n\]
and morphisms are natural weak equivalences between them. The map of the proposition factors as $\CC_n\to \DD_n\to \CC_0^{n+1}$. We denote by $\EE_n$ the category whose objects are zig-zags in $\cat{M}$ of the form
\[m_0\lto m'_0\lwe m_1\lto\ldots\lwe m_{n-1}\lto m'_n\lwe m_n\]
and morphisms are natural weak equivalences between them. Note that this notation conflicts with the one in the proof of Proposition~\ref{prop: CM is a Segal space}. There is an inclusion $\DD_n\to\EE_n$ and the map $\CC_n\to \CC_0^{n+1}$ factors as $\CC_n\to\DD_n\to\EE_n\to\CC_0^{n+1}$.

(1) We first claim that the map $\CC_n\to \DD_n$ is a weak equivalence. In fact, as in the proof of Proposition~\ref{prop: CM is a Segal space}, it is an equivalence of categories since pushouts are determined up to a unique isomorphism.

(2) We claim that the map $\DD_n\to \EE_n$ is a weak equivalence. We do the case $n=1$, to keep the notations simple.  The general case is a straightforward generalization. We denote by $\alpha:\DD_1\to\EE_1$ the inclusion. We construct a map $\beta:\EE_1\to \DD_1$. It sends $m\xrightarrow{f} a\xleftarrow{k} n$ to $m\stackrel{s(k)\circ f}{\longrightarrow} a'\stackrel{c(k)}{\longleftarrow} n$ where $s$ and $c$ are the functor appearing in the factorization axiom (see axiom 4 of Definition~\ref{defi PBC}). Note that we have the following commutative diagram:
\[
\xymatrix{
m\ar[r]^f&a&n\ar[l]_{k}\\
m\ar[r]_{s(k)\circ f}  \ar[u]^{\id}           &a'\ar[u]^{w(k)}&n\ar[l]^{c(k)}\ar[u]_{\id}
}
\]
This immediately implies that there are natural transformation $\beta\circ\alpha\to\id_{\DD_1}$ and $\alpha\circ\beta\to\id_{\EE_1}$.

(3) Now we prove that the map $\EE_n\to \CC_0^{n+1}$ is a fibrillation. For $(m_0,\ldots,m_n)$ a sequence of objects of $\cat{M}$, we denote by $\EE(m_0,m_1,\ldots,m_n)$ the fiber of $\EE_n$ over $(m_0,\ldots,m_n)\in\CC_0^{n+1}$. The assignment $(m_0,\ldots,m_n)\mapsto \EE(m_0,\ldots,m_n)$ extends to a functor $P:(\CC_0^{n+1})\op\to\Cat$ and the category $\EE$ is isomorphic to the Grothendieck construction of $P$. Hence, by Proposition~\ref{Theorem B} it suffices to prove that $P$ has property $Q$. By Lemma~\ref{Ken Brown's lemma}, it suffices to prove that $P$ sends products of $n+1$ trivial cofibrations to weak equivalences. Again, we restrict ourselves to the case $n=1$ to simplify the notations. We want to prove that the functor $P:(\CC_0^{2})\op\to\Cat$ sends products of trivial cofibrations to weak equivalence. We can restrict to maps of the form $(u,\id_m)$ and $(\id_m,u)$ where $u$ is a trivial cofibration of $\cat{M}$ since any product of trivial cofibrations is a composite of maps of this form. 

But now, we claim that if $u:a\to b$ is a trivial cofibration the map
\[P(u):\EE(b,m)\to \EE(a,m)\]
has a left adjoint sending $a\lto m'\lwe m$ to $b\lto b\sqcup^am'\lwe m$. The other case is similar.

(4) Hence, by the previous three paragraphs and by~\ref{fibrillations and hpb}, it suffices to prove that the map $\CC(m_0,\ldots,m_n)\to\EE(m_0,\ldots,m_n)$ is a weak equivalence for any object $(m_0,\ldots,m_n)$ in $\CC_0^{n+1}$ (where $\EE(m_0,\ldots,m_n)$ denotes the fiber of the map $\EE_n\to\CC_0^{n+1}$ over $(m_0,\ldots,m_n)$). We can factor this map through $\DD(m_0,\ldots,m_n)$. The map $\CC(m_0,\ldots,m_n)\to\DD(m_0,\ldots,m_n)$ is an equivalence of categories by uniqueness up to a unique isomorphism of pushouts exactly as in (1). Similarly, an argument analogous to (2) implies that $\DD(m_0,\ldots,m_n)\to\EE(m_0,\ldots,m_n)$ is a weak equivalence.
\end{proof}

\section{A Segal category}

Recall that $\cat{CSS}$ denotes the category of simplicial spaces with Rezk's complete Segal space model structure (constructed in~\cite{rezkmodel}). We denote by $\cat{S}e\Cat$ the category of Segal precategories (that is of functors $\Delta\op\to\S$ which are discrete in degree $0$) with Bergner's injective model structure (constructed in~\cite[Theorem 5.1.]{bergnerthree}). 
By~\cite[Theorem 6.3.]{bergnerthree}, we have a Quillen equivalence:
\[i:\cat{S}e\Cat\leftrightarrows\cat{CSS}:\delta\]
where $i$ is the inclusion and $\delta$ its right adjoint.

The functor $\delta$ is constructed explicitly in~\cite[Section 6]{bergnerthree} under the name $R$. There is a similar functor also denoted $\delta$ sending a simplicial category $X$ to the pullback
\[\xymatrix{
\delta(X)\ar[d]\ar[r]& X\ar[d]\\
\on{cosk}_0\on{Ob}(X_0)\ar[r]& \on{cosk}_0X_0
}
\]
where $\on{cosk}_0$ denotes the zero-coskeleton functor. We have an obvious isomorphism $N(\delta X)\cong \delta N(X)$.

\begin{defi}
We say that a simplicial space $X$ is \textbf{quasifibrant} if the canonical map $X\to \on{cosk}_0X_0$ is levelwise a quasifibration.
\end{defi}

We observe that if $X$ is quasifibrant and is a Segal space, then $\delta(X)$ is a Segal category. The following proposition implies that for quasifibrant simplicial spaces, the functor $\delta$ coincides with its derived functor.

\begin{prop}\label{derived delta}
Let $f:X\to Y$ be a map between quasifibrant Segal spaces. Then, $f$ is a Rezk equivalence if and only if $\delta(f)$ is a weak equivalence in $\cat{S}e\Cat$.
\end{prop}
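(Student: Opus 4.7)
The plan is to reduce to the core claim that for every quasifibrant Segal space $X$, the counit $\epsilon_X\colon i\delta(X)\to X$ of the adjunction is a Rezk equivalence. Granting this, the proposition follows from two-out-of-three applied to the naturality square of $\epsilon$, together with the observation that, since every object of $\cat{S}e\Cat$ is cofibrant in Bergner's injective model structure, the Quillen equivalence $i\dashv\delta$ makes $i$ into an equivalence of homotopy categories: thus a map in $\cat{S}e\Cat$ is a weak equivalence if and only if its image under $i$ is a Rezk equivalence.

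To establish the claim, I first verify that $\delta(X)$ is a Segal category. From the defining pullback, $\delta(X)_n=X_n\times_{X_0^{n+1}}\delta(X)_0^{n+1}$, and since $\delta(X)_0$ is the discrete set of vertices of $X_0$, this displays $\delta(X)_n$ as the coproduct over $(x_0,\ldots,x_n)\in\delta(X)_0^{n+1}$ of the strict fibers of $X_n\to X_0^{n+1}$. Quasifibrancy of $X$ identifies each such strict fiber with the corresponding homotopy fiber $X(x_0,\ldots,x_n)$ in the notation of Section~3. Since the Segal map for $X$ commutes with the projection to $X_0^{n+1}$, passing to homotopy fibers over a discrete point yields $X(x_0,\ldots,x_n)\simeq X(x_0,x_1)\times\cdots\times X(x_{n-1},x_n)$, which is the Segal condition for $\delta(X)$.

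With $\delta(X)$ a Segal category, I then apply Lemma~\ref{lemm: criterion Rezk equivalence} to $\epsilon_X$. The surjectivity on $\pi_0$ is immediate, since the map $\delta(X)_0\to\pi_0(X_0)$ is the canonical surjection from vertices to path components. For the homotopy cartesian condition on the degree-one square, the map $X_1\to X_0\times X_0$ is a quasifibration by hypothesis, and its strict pullback along the discrete inclusion $\delta(X)_0\times\delta(X)_0\hookrightarrow X_0\times X_0$ is a coproduct of strict fibers, each of which equals the corresponding homotopy fiber; hence the strict pullback computes the homotopy pullback and the square is homotopy cartesian. Both conditions being satisfied, $\epsilon_X$ is a Rezk equivalence, and the proposition follows.

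The main subtlety I anticipate is that a pullback of a quasifibration along an arbitrary map need not itself be a quasifibration, so the argument relies essentially on pulling back only along the discrete subspace $\delta(X)_0^{n+1}\hookrightarrow X_0^{n+1}$, where strict and homotopy fibers can be compared pointwise; this point appears in both the Segal verification for $\delta(X)$ and the homotopy cartesianness check in Lemma~\ref{lemm: criterion Rezk equivalence}.
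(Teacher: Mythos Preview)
Your proof is correct and follows essentially the same route as the paper: reduce to showing the counit $i\delta(X)\to X$ is a Rezk equivalence via Lemma~\ref{lemm: criterion Rezk equivalence}, then conclude by two-out-of-three on the naturality square together with the fact that $i$ reflects weak equivalences. The paper invokes Bergner for this last fact where you give a direct cofibrancy argument, and you supply a more careful justification (via the discrete base) for why the degree-one square is homotopy cartesian, but the architecture is identical.
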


\begin{proof}
(1) First, we claim that for $X$ quasifibrant, the counit map $i\delta(X)\to X$ is a weak equivalence in $\cat{CSS}$. We use Lemma~\ref{lemm: criterion Rezk equivalence}. We have a cartesian square
\[\xymatrix{
i\delta(X)_1\ar[r]\ar[d]&X_1\ar[d]\\
i\delta(X)_0\times i\delta(X)_0\ar[r]&X_0\times X_0
}
\]
By assumption, the map $X_1\to X_0\times X_0$ is a quasifibration. Hence the square is homotopy cartesian by definition of a quasifibration. The second hypothesis of Lemma~\ref{lemm: criterion Rezk equivalence} is satisfied because the map $i\delta(X)_0\to X_0$ is a bijection on the zero-simplices. 

(2) Now, we prove the proposition. We have a commutative diagram in $\S^{\Delta\op}$:
\[\xymatrix{
i\delta(X)\ar[d]_{i\delta (f)}\ar[r]&X\ar[d]^f\\
i\delta(Y)\ar[r]&Y
}
\]
By the previous paragraph and the two-out-of-three property, we see that $f$ is a weak equivalence if and only if $i\delta(f)$ is a weak equivalence. It is shown in the proof of~\cite[Theorem 6.3.]{bergnerthree} that the functor $i$ reflects weak equivalences. Thus $f$ is a weak equivalence if and only if $\delta(f)$ is a weak equivalence.
\end{proof}

We are now ready to introduce the main construction of this paper and prove our main theorem.
 
\begin{defi}\label{defi: weiss bicategory}
We define the \textbf{Weiss bicategory} of $\cat{M}$ to be the functor $\WW(M):=\delta \CC(\cat{M})$. We also define $W(\cat{M}):=N\WW(\cat{M})$.
\end{defi}

\begin{rem}
Note that the Weiss bicategory $\WW(\cat{M})$ is not a bicategory but merely a Tamsamani bicategory (see Remark~\ref{rem: bicategory} for an explanation of the relationship between these two concepts). Its name comes from the fact that this construction is very similar to the bicategory constructed in~\cite[Remark 1.2.]{weisshammock}. 
\end{rem}

By Proposition~\ref{CM is correct}, the functor $C$ is levelwise weakly equivalent to to $N^R$ and the latter functor sends weak equivalences of relative categories to weak equivalences. It follows that $C$ sends right exact equivalences to weak equivalences. By Proposition~\ref{derived delta} and Proposition~\ref{CM is fibrillant}, it follows that the functors $\WW$ and $W$ carry right exact equivalences to weak equivalences. Moreover, the following theorem together with Theorem~\ref{theo:main appendix} insures that the Segal category $W(\cat{M})$ is a model for the $\infty$-categorical localization of $\cat{M}$.

\begin{theo}\label{theo: main}
The Segal category $W(\cat{M})$ is connected to $N^R(\cat{M})$ by a functorial zig-zag of Rezk equivalences.
\end{theo}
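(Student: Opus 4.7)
The plan is to assemble the zig-zag directly from the ingredients already developed in the preceding sections. First I observe that, using the isomorphism $N\delta(X)\cong\delta N(X)$ noted just before Definition~\ref{defi: weiss bicategory}, we have
\[
W(\cat{M})\;=\;N\WW(\cat{M})\;=\;N\delta\CC(\cat{M})\;\cong\;\delta C(\cat{M}).
\]
So it suffices to relate $N^R(\cat{M})$ and $\delta C(\cat{M})$ by a functorial zig-zag of Rezk equivalences, with $C(\cat{M})$ as the natural intermediate object.

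The forward leg is immediate: Proposition~\ref{CM is correct} gives a natural levelwise weak equivalence $N^R_\bullet(\cat{M})\to C(\cat{M})$ of simplicial spaces, and a levelwise weak equivalence is in particular a weak equivalence in the Reedy model structure, hence a Rezk equivalence (since $\cat{CSS}$ is a left Bousfield localization of the Reedy model structure). This map is clearly functorial in $\cat{M}$.

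For the backward leg, I would use the preparatory work of the previous section. By Proposition~\ref{prop: CM is a Segal space}, $C(\cat{M})$ is a Segal space, and by Proposition~\ref{CM is fibrillant} the maps $\CC_n(\cat{M})\to\CC_0(\cat{M})^{n+1}$ are quasifibrations in $\Cat$; applying $N$ (which preserves homotopy cartesian squares, as noted after the definition of quasifibration) shows that $C(\cat{M})$ is a quasifibrant simplicial space in the sense of the previous section. Step (1) of the proof of Proposition~\ref{derived delta} then applies verbatim and produces a natural Rezk equivalence $i\delta C(\cat{M})\to C(\cat{M})$, i.e.\ a natural Rezk equivalence $iW(\cat{M})\to C(\cat{M})$.

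Concatenating these two maps gives the desired functorial zig-zag
\[
N^R(\cat{M})\;\we\;C(\cat{M})\;\lwe\;iW(\cat{M}).
\]
There is no real obstacle here beyond bookkeeping: the only nonformal inputs are quasifibrancy (Proposition~\ref{CM is fibrillant}) and the Segal condition (Proposition~\ref{prop: CM is a Segal space}), both of which have already been established, and the hardest conceptual point—why passing through $\delta$ does not change the homotopy type in the quasifibrant case—is exactly what Proposition~\ref{derived delta} provides.
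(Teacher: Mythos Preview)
Your argument is correct and is essentially the paper's own proof: you exhibit the same zig-zag $N^R(\cat{M})\to C(\cat{M})\leftarrow iW(\cat{M})$, citing Proposition~\ref{CM is correct} for the left leg and step~(1) of the proof of Proposition~\ref{derived delta} (after verifying quasifibrancy via Proposition~\ref{CM is fibrillant} and the Segal condition via Proposition~\ref{prop: CM is a Segal space}) for the right leg. The only difference is cosmetic: you spell out the identification $W(\cat{M})\cong\delta C(\cat{M})$ and the hypotheses needed to invoke step~(1), which the paper leaves implicit.
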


\begin{proof}
The counit $iW(\cat{M})\to C(\cat{M})$ is a weak equivalence by the first paragraph of the proof of Proposition~\ref{derived delta}. Moreover, there is a functorial weak equivalence $N^R(\cat{M})\to C(\cat{M})$ by Proposition~\ref{CM is correct}.
\end{proof}

\appendix

\section{Rezk nerve as $\infty$-localization}

The purpose of this appendix is to give a short proof of the fact that Rezk's relative nerve construction  models the $\infty$-categorical localization. This proof is sketched by Christopher Schommer-Pries on Mathoverflow (see~\cite{schommerlocalization}). We refer the reader to the second section for background material and notations about relative categories.

We denote by $\cat{QC}at$ the category of simplicial sets with the Joyal model structure (see~\cite[Theorem 2.2.5.1.]{lurietopos}). We denote by $\cat{CSS}$ the category of simplicial spaces with the Rezk model structure (constructed in~\cite{rezkmodel}). We denote respectively by $\cat{QC}at^f$ and $\cat{CSS}^f$ the full subcategories spanned by the quasicategories and the Reedy fibrant complete Segal spaces. For $i\geq 0$, we denote by $F(i)$ the nerve of $[i]$ seen as a levelwise discrete simplicial space.

For $(\cat{C},w\cat{C})$ a (not necessarily small) relative category, we denote by $\L(\cat{C},w\cat{C})$ or $\L\cat{C}$ if there is no ambiguity the quasicategory obtained by taking the homotopy coherent nerve of a fibrant replacement (in the Bergner model structure on simplicially enriched categories) of the hammock localization of $\cat{C}$. We denote by $L:\cat{RC}at\to\cat{QC}at$ the restriction of $\L$ to small relative categories.

It is proved in~\cite[Proposition 1.2.1.]{hinichdwyer} that for a relative category $(\cat{C},w\cat{C})$, the quasicategory $L(\cat{C},w\cat{C})$ is a model for the $\infty$-categorical localization in the sense that it represents a functor weakly equivalent to the functor $\cat{QC}at^f\to\S$ sending $X$ to 
\[\Map(N\cat{C},X)\times^h_{\Map(Nw\cat{C},X)}\Map(Nw\cat{C},\iota X)\]
where $\iota X$ is the maximal Kan complex contained in $X$.

Let $p_1^*:\cat{QC}at\to\cat{CSS}$ be the functor sending a simplicial set to the same simplicial set seen as a levelwise discrete simplicial space. It is proved in~\cite{joyalquasi} that the functor $p_1^*$ is a left Quillen equivalence. Using Hinich result, we can thus deduce easily that the functor $\cat{CSS}^f\to\S$ sending $X$ to $\Map(p_1^*L(\cat{C},w\cat{C}),X)$ coincides up to natural weak equivalences with the functor
\[X\mapsto\Map(p_1^*N\cat{C},X)\times^h_{\Map(p_1^*Nw\cat{C},X)}\Map(p_1^*Nw\cat{C},cX_0)\]
where $cX_0$ denotes the constant simplicial space on the space $X_0$. Our goal is to show that $p_1^*L(\cat{C},w\cat{C})$ is weakly equivalent to  $N^R(\cat{C},w\cat{C})$.

Our proof is based on a theorem of To\"en. We denote by $\tau$ the map from $\cat{CSS}$ to itself induced by precomposition with the functor $(-)\op:\Delta\to\Delta$.

\begin{theo}[To\"en]\label{p:Toen}
Let $\cat{C}$ be a relative category and $f,g:\L\cat{C}\to\L\cat{CSS}$ be two categorical equivalences (i.e. weak equivalences in Joyal's model structure). Then either $f$ is homotopic to $g$ or $f$ is homotopic to $\L(\tau)\circ g$.
\end{theo}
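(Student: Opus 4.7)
The plan is to reduce the statement to a computation of the space of auto-equivalences of $\L\cat{CSS}$, which is the content of To\"en's unicity theorem for the $\infty$-category of $\infty$-categories (from his paper \emph{Vers une axiomatisation de la th\'eorie des cat\'egories sup\'erieures}). Concretely, since $g$ is a categorical equivalence, it admits a homotopy inverse $g^{-1}:\L\cat{CSS}\to\L\cat{C}$, and we set $h:=f\circ g^{-1}:\L\cat{CSS}\to\L\cat{CSS}$. The assertion that $f\simeq g$ (resp. $f\simeq \L(\tau)\circ g$) in the Joyal model structure is equivalent to the assertion that $h\simeq\id_{\L\cat{CSS}}$ (resp. $h\simeq\L(\tau)$), since postcomposition with $g$ is an equivalence on homotopy classes of maps out of $\L\cat{CSS}$.

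The next step is then to invoke To\"en's theorem, which identifies the space of auto-equivalences $\Map^{\simeq}_{\cat{QC}at}(\L\cat{CSS},\L\cat{CSS})$ with the constant simplicial set on the two-element set $\mathbb{Z}/2$, whose nontrivial element is represented by the involution $\L(\tau)$ induced by $(-)\op:\Delta\to\Delta$. In particular $\pi_0$ of this space is $\mathbb{Z}/2=\{[\id],[\L(\tau)]\}$, so any auto-equivalence $h$ is homotopic to exactly one of $\id_{\L\cat{CSS}}$ or $\L(\tau)$. Applying this to $h=f\circ g^{-1}$ and transporting the homotopy back along $g$ proves the dichotomy stated in the theorem.

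The genuinely hard part is the input from To\"en, not the reduction above; I would simply quote it. A minor subtlety to address is that To\"en's original statement is formulated in the language of Segal categories or $S$-enriched categories, so one should briefly record that via the Quillen equivalences between $\cat{S}e\Cat$, $\cat{CSS}$ and the Bergner model structure on simplicially enriched categories (the same equivalences invoked elsewhere in the paper), the resulting computation of auto-equivalences transports to the Joyal model structure on $\cat{QC}at$ in which the present statement is phrased. Once this translation is in place, the proof consists only of the one-line argument with $h=f\circ g^{-1}$.
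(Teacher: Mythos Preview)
Your proposal is correct and follows essentially the same route as the paper: both reduce to To\"en's computation that the homotopy automorphism group of $\L\cat{CSS}$ is $\mathbb{Z}/2$ generated by $\L(\tau)$, then compose one equivalence with a homotopy inverse of the other to obtain an auto-equivalence. The only cosmetic difference is that the paper inverts $f$ and considers $g\circ f^{-1}$ rather than your $f\circ g^{-1}$, and your remark about transporting To\"en's statement across the standard Quillen equivalences is a reasonable gloss that the paper leaves implicit.
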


\begin{proof}
The map $\L(\tau)$ induces a map $\mathbb{Z}/2\to\cat{QC}at(\L\cat{CSS},\L\cat{CSS})$. According to~\cite[Théorème 6.3.]{toenaxiomatisation}, it induces a weak equivalence from $\mathbb{Z}/2$ to the group of homotopy automorphisms of $\L\cat{CSS}$. Let $h$ be a map $\L\cat{CSS}\to\L\cat{C}$ which is a homotopy inverse to $f$. Then $g\circ h$ is a homotopy automorphism of $\L\cat{CSS}$ and hence is homotopic to $\L(\tau)$ or to $\L(\id)$. Therefore $f$ is either homotopic to $g$ or to $\L(\tau)\circ g$.
\end{proof}

We can now prove the main result of this appendix.

\begin{theo}\label{theo:main appendix}
Let $(\cat{C},w\cat{C})$ be a relative category. Then, in the model category $\cat{CSS}$, the object $N^R(\cat{C},w\cat{C})$ is weakly equivalent to $p_1^*L(\cat{C},w\cat{C})$.
\end{theo}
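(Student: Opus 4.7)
The plan is to apply To\"en's dichotomy (Theorem~\ref{p:Toen}) with $\cat{C}=\cat{RC}at$ to the two functors $N^R$ and $p_1^*L$ viewed as morphisms $\L\cat{RC}at\to\L\cat{CSS}$. For this to be justified, I first need to know that both $\L N^R$ and $\L(p_1^*L)$ are categorical equivalences. For $N^R$ this is the main result of~\cite{barwickrelative}, which identifies the Rezk nerve as a Quillen equivalence between $\cat{RC}at$ and $\cat{CSS}$. For $p_1^*L$, it follows by combining the statement that $L\colon\cat{RC}at\to\cat{QC}at$ models the $\infty$-categorical localization (recalled above from~\cite[Proposition 1.2.1.]{hinichdwyer}, so that $\L L$ is an equivalence) with the Quillen equivalence $p_1^*\colon\cat{QC}at\to\cat{CSS}$ of~\cite{joyalquasi}.

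Granted these two equivalences, To\"en's theorem yields a dichotomy: either $\L N^R$ is homotopic to $\L(p_1^*L)$, or $\L N^R$ is homotopic to $\L\tau\circ \L(p_1^*L)$. In the first alternative, evaluating on any $(\cat{C},w\cat{C})$ immediately produces a Rezk equivalence $N^R(\cat{C},w\cat{C})\simeq p_1^*L(\cat{C},w\cat{C})$, which is exactly the conclusion of the theorem. It therefore suffices to exclude the second alternative. To do so, I evaluate both functors on the morphism of relative categories $\iota\colon[0]\to [1]$ selecting vertex $0$. Unwinding the definition of $N^R$ one checks that $N^R(\iota)$ is the face $d_1\colon\Delta^0\to\Delta^1$ landing on vertex $0$, and likewise $p_1^*L(\iota)=d_1$, since the hammock localization of the poset $[1]$ is canonically $\Delta^1$. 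On the other hand $\tau$ reverses the orientation of $\Delta^1$, exchanging $d_0$ and $d_1$, so $\L\tau\circ\L(p_1^*L)(\iota)$ is the face $d_0\colon\Delta^0\to\Delta^1$. Since $d_0$ and $d_1$ represent distinct classes in $\pi_0\Map_{\L\cat{CSS}}(\Delta^0,\Delta^1)$, the second alternative is ruled out and the first gives the theorem.

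The main obstacle is the first step, namely verifying that both $\L N^R$ and $\L(p_1^*L)$ are categorical equivalences; this relies on the substantial external inputs~\cite{barwickrelative} and~\cite{hinichdwyer}. Once that is accepted, the remainder of the argument is a clean application of To\"en's uniqueness theorem followed by the elementary computation on the inclusion $[0]\to[1]$.
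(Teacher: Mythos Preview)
Your proof is correct and follows the same strategy as the paper: verify that $N^R$ and $p_1^*L$ both induce equivalences $\L\cat{RC}at\to\L\cat{CSS}$, invoke To\"en's dichotomy, and rule out the $\tau$-branch by a small explicit computation. Two minor points are worth flagging.

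First, the citation of Hinich is not quite what you need. Hinich's result is a \emph{pointwise} universal property of $L(\cat{C},w\cat{C})$; it does not directly say that $L$ induces an equivalence $\L\cat{RC}at\to\L\cat{QC}at$. The paper instead factors $L$ as hammock localization followed by homotopy coherent nerve and cites~\cite{barwickcharacterization} and~\cite{lurietopos} to see each factor as an equivalence of relative categories.

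Second, your distinguishing computation is slightly different from the paper's and leaves one step implicit. You evaluate at a single inclusion $\iota\colon[0]\to[1]$ and observe that the two candidate functors send it to $d_1$ and $d_0$ respectively. But a natural equivalence between two functors only forces $F(\iota)$ and $G(\iota)$ to agree \emph{after conjugating by the components} $\alpha_{[0]}$ and $\alpha_{[1]}$; to conclude $d_0=d_1$ you need that these components are pinned down, i.e.\ that $\Delta^0$ and $\Delta^1$ have no nontrivial self-equivalences in $\on{Ho}(\cat{CSS})$. This is true and easy, but should be said. The paper instead evaluates at the ordered pair $(d^0,d^1)\colon[0]\sqcup[0]\to[1]$ and passes via Yoneda to functors into equalizer diagrams of sets, which it then distinguishes on the nerve of $\bullet\leftarrow\bullet\to\bullet$; this packaging handles the same bookkeeping in a different way.
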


\begin{proof}
We first observe that $p_1^*L$ and $N^R$ are two equivalences of relative categories from $\cat{RC}at$ to $\cat{CSS}$. For $N^R$, this follows from~\cite[Lemma 5.4. and Theorem 6.1.]{barwickrelative}. For $p_1^*L$ this follows from the fact that $p_1^*$ is an equivalence by the main theorem of~\cite{joyalquasi} and that $L$ is an equivalence as the composite of two equivalences (the hammock localization is an equivalence by~\cite[Proposition 3.1. and section 2.5.]{barwickcharacterization} and the homotopy coherent nerve is an equivalence by~\cite[Theorem 2.2.5.1.]{lurietopos}). Hence, we have two relative equivalences $p_1^*L$ and $N^R$ from $\cat{RC}at$ to $\cat{CSS}$. If we can prove that $\L(p_1^*L)$ is homotopic to $\L(N^R)$, this will imply that for any relative category $N^R(\cat{C},w\cat{C})$ is connected by a zig-zag of weak equivalences to $p_1^*L(\cat{C},w\cat{C})$ and conclude the proof.

According to Proposition~\ref{p:Toen}, it suffices to prove that $\L (p_1^*L)$ is not homotopic to $\L(\tau\circ N^R)$. Let us consider the map $(d^0,d^1):[0]\sqcup[0]\to[1]$. We observe that the functor $p_1^*L$ sends this map to
\[(d^0,d^1):F(0)\sqcup F(0)\to F(1)\]
and $\tau\circ N^R$ sends it to
\[(d^1,d^0):F(0)\sqcup F(0)\to F(1)\]
If the two functors were homotopic, by passing to the homotopy category of $\cat{CSS}$, we would find a natural isomorphism between the following two functors from $\on{Ho}\cat{CSS}$ to equalizer diagrams of sets
\[X\mapsto ((d_0,d_1):\pi_0(X_1)\rightrightarrows \pi_0(X_0)),\;\; \mathrm{ and }\;\; X\mapsto ((d_1,d_0):\pi_0(X_1)\rightrightarrows \pi_0(X_0))\]

In particular, taking $X$ to be the nerve of the category freely generated by the oriented graph $\bullet\leftarrow \bullet\rightarrow\bullet$, we would find an isomorphism between this graph and the graph $\bullet\rightarrow\bullet\leftarrow\bullet$ which is a contradiction.
\end{proof}

\bibliographystyle{alpha}
\bibliography{biblio}

\end{document}